\newtheorem{cor}{Corollary}
\newtheorem{theo}{Theorem}
\newtheorem{lem}{Lemma}
\title{Choosability with separation of complete multipartite graphs and hypergraphs}
\author{{\bf Zolt\'an F\"uredi}\\
{\small\em
University of Illinois at Urbana--Champaign, Urbana, IL 61801}\\
{\small\em and
Mathematical Institute of the Hungarian Academy of Sciences, Budapest 1364, Hungary}
\thanks{ Research supported in part by the Hungarian National Science Foundation OTKA, 
 by the National Science Foundation under grant NFS DMS 09-01276, 
  and by the European Research Council Advanced Investigators Grant 267195.} \\
{\small\texttt furedi@renyi.hu, z-furedi@math.uiuc.edu}
\\ \\
{\bf Alexandr Kostochka}\\
{\small\em
University of Illinois at Urbana--Champaign, Urbana, IL 61801}\\
{\small\em and Sobolev Institute of Mathematics, Novosibirsk 630090, Russia}
\thanks{ Research of this author
is supported in part by NSF grant DMS-0965587 and by
the Ministry of education and science of the Russian Federation (Contract no. 14.740.11.0868).}\\
{\small\texttt kostochk@math.uiuc.edu}\\
\\
{\bf Mohit Kumbhat}\\
{\small\em University of Illinois at Urbana--Champaign, Urbana, IL 61801}
\thanks{ Research was done partly at the R\'enyi Mathematical Institute, Budapest, Hungary.} 
\\
{\small\texttt kumbhat2@illinois.edu} \\
}
\date{${}$}
\begin{document}

\maketitle
\begin{abstract}
For a hypergraph $G$ and a positive integer $s$, let $\chi_{\ell} (G,s)$ be the minimum value of $l$ such
that $G$ is $L$-colorable from every  list $L$
with $|L(v)|=l$ for each $v\in V(G)$ and $|L(u)\cap L(v)|\leq s$ for all $u, v\in e\in E(G)$.
 This parameter was studied by Kratochv\'{i}l, Tuza and Voigt 
 for various kinds of graphs.
Using randomized constructions 
  we find the asymptotics  of $\chi_{\ell} (G,s)$ for   balanced  complete multipartite graphs 
and for complete $k$-partite $k$-uniform hypergraphs. 
\\
\emph{Key words}: multipartite graphs, hypergraphs, list coloring, separation.\\
\emph{2010 Mathematics Subject Classification}: 05C15.
\end{abstract}

\section{Introduction}

Given a hypergraph $G$, a {\em list } $L$ for $G$ is an assignment to every $v\in V(G)$
 of a set $L(v)$ of colors that may be used for the coloring of $v$.
 We say that $G$ is $L$-colorable, if there exists a proper coloring
 $f$ of the vertices of $G$ from $L$, i.e. if $f(v)\in L(v)$ for all
 $v\in V(G)$ and no edge of $G$ is monochromatic in $f$. A list
 $L$ for a hypergraph $G$ is a $k$-{\em list} if $|L(v)|=k$
  for all $v\in V(G)$.
  An extensively studied parameter is the
  $\textit{list chromatic number}$ of $G$, $\chi_{l}(G)$,
 introduced by Vizing~\cite{V} and Erd\H os, Rubin and Taylor~\cite{ERT}.
For a hypergraph $G$,   $\chi_{l}(G)$
 is
  the least $k$ such that $G$ is $L$-colorable for every $k$-list $L$.
This parameter is also sometimes called
  $\textit{choice number}$, or $\textit{choosability}$ of $G$. \medskip

By definition, $\chi_{\ell} (G)\ge \chi(G)$ for any hypergraph $G$. Moreover,
$\chi_{\ell} (G)$ may be much larger than $\chi(G)$. For example,
$\chi_{\ell} (K_{n,n})$ has the order of $\log n$ (see, e.g.,~\cite{A}),
while, by definition, $\chi(K_{n,n})=2$.
 It is natural to ask what happens when the lists of adjacent vertices in (hyper)graphs
 do not intersect too much. \medskip

For a positive integer  $s$, a list $L$ for a hypergraph $G$ is  $s$-{\em separated} if
 $|L(u)\cap L(v)|\le s$ for all pairs $\{u,v\}$ such that some edge of $G$ contains both, $u$ and $v$.
If $G$ is a graph, this means that for each $uv \in E(G)$, $L(u)$ and $L(v)$ share at most $s$ colors.
 Let $\chi_{\ell}(G,s)$ denote the minimum
$k$ such that $G$ is $L$-colorable from each $s$-separated $k$-list  $L$.
 By definition, for every $1\leq s_1\leq s_2$,
\begin{equation}\label{j1}
\chi_{\ell}(G,s_1)\leq \chi_{\ell}(G,s_2)\leq \chi_{\ell}(G).
\end{equation}

 Kratochv{\'i}l, Tuza and Voigt~\cite{KTV} studied $\chi_{\ell} (G,s)$ for various $G$ and $s$.
  They showed the following.

\begin{theo}[\cite{KTV}]\label{thm1} For positive integers $s,n$ with $s\leq n$,
$\sqrt{\frac{1}{2}sn}\le \chi_{\ell} ({K_n,s})\le \sqrt{2esn}$.
\end{theo}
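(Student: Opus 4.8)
The plan is to use the fact that a proper coloring of the complete graph $K_n$ is exactly a choice of \emph{distinct} colors, one from each list; thus $K_n$ is $L$-colorable if and only if the lists $L(v_1),\dots,L(v_n)$ admit a system of distinct representatives. By Hall's theorem this happens precisely when $|\bigcup_{v\in S}L(v)|\ge |S|$ for every $S\subseteq V(K_n)$. Both inequalities of the theorem then become statements about families of $k$-sets whose pairwise intersections are at most $s$, and the whole argument reduces to estimating how small a ground set can host such a family.

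For the upper bound I would argue by contradiction: suppose $L$ is an $s$-separated $k$-list for which Hall's condition fails, say $|\bigcup_{v\in S}L(v)|\le m-1$ for some $S$ with $|S|=m$. Writing $U=\bigcup_{v\in S}L(v)$ and $d_c=|\{v\in S: c\in L(v)\}|$ for $c\in U$, I have $\sum_{c\in U}d_c=mk$, while $\sum_{c\in U}\binom{d_c}{2}=\sum_{\{u,v\}\subseteq S}|L(u)\cap L(v)|\le s\binom{m}{2}$. Applying convexity (Jensen) to the left sum over the $|U|\le m-1$ colors gives $|U|\binom{mk/|U|}{2}\le s\binom{m}{2}$, and after simplification together with $|U|\le m-1$ this collapses to $k(k-1)<s(m-1)\le s(n-1)$. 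Hence if $k(k-1)\ge s(n-1)$ no such $S$ can exist, so $\chi_{\ell}(K_n,s)$ is at most the least $k$ with $k(k-1)\ge s(n-1)$; this is of order $\sqrt{sn}$ and in particular sits comfortably below $\sqrt{2esn}$.

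For the lower bound I would exhibit an $s$-separated $k$-list with no proper coloring by forcing a Hall violation at $S=V(K_n)$: if every list is a subset of a fixed ground set $[n-1]$, then $\bigcup_v L(v)$ has fewer than $n$ colors and no system of distinct representatives exists. So it suffices to produce $n$ many $k$-subsets of $[n-1]$ with all pairwise intersections at most $s$, where $k$ is as large as possible. I would choose each $L(v)$ to be an independent uniform random $k$-subset of $[n-1]$; then $\mathbb{E}|L(u)\cap L(v)|=k^2/(n-1)$, and setting this equal to $s/2$ gives $k\approx\sqrt{s(n-1)/2}\approx\sqrt{sn/2}$. A Chernoff-type concentration bound for the intersection of two random sets shows $\Pr[\,|L(u)\cap L(v)|>s\,]\le e^{-\Omega(s)}$, and a union bound over the $\binom{n}{2}$ pairs (or a deletion/alteration step discarding one endpoint of each over-large pair) yields the desired family.

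The main obstacle I anticipate is exactly this last step: the union bound succeeds only when $e^{-\Omega(s)}\binom{n}{2}<1$, that is, when $s$ is at least of order $\log n$, and the same threshold limits the alteration argument. For small $s$ (down to $s=1$, where the requirement becomes a partial Steiner system) I would instead invoke a semi-random ``nibble''/random-greedy packing argument to build the near-optimal family of $k$-subsets, so that the constant $\frac{1}{2}$ in $\sqrt{sn/2}$ is retained across the whole range $1\le s\le n$. Pushing the concentration to the mean level $s/2$ (rather than to a smaller fraction of $s$) is the delicate quantitative point on which the stated constant ultimately depends.
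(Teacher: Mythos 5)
This paper never proves Theorem~\ref{thm1}: it is quoted as background from \cite{KTV}, so your attempt has to be judged on its own merits rather than against an in-paper argument. Your upper bound is correct, and in fact stronger than what is claimed. The reduction of $L$-colorability of $K_n$ to a system of distinct representatives is exactly right, and your Hall-violation counting is sound: with $\sum_{c}d_c=mk$, $\sum_c\binom{d_c}{2}\le s\binom{m}{2}$ and Jensen applied over the at most $m-1$ colors, one indeed gets $k(k-1)<s(m-1)\le s(n-1)$, so every $s$-separated $k$-list is colorable once $k(k-1)\ge s(n-1)$. This gives $\chi_{\ell}(K_n,s)\le \sqrt{s(n-1)}+1$, comfortably below $\sqrt{2esn}$ (the two cases $sn\le 2$ are trivial). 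Note this is a genuinely different and more elementary route than the one the constant $\sqrt{2e}$ hints at: choosing one random color per list, a fixed pair conflicts with probability at most $s/p^2$ and each pair-event depends on fewer than $2n$ others, so the Local Lemma gives colorability when $p\ge\sqrt{2esn}$; your counting argument avoids the Local Lemma and already yields the $(1+o(1))\sqrt{sn}$ upper bound of \cite{FKK}.

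The lower bound, however, has a genuine gap, which you identify yourself but do not close. Your random construction (independent uniform $k$-subsets of $[n-1]$ with $k^2/(n-1)\approx s/2$) works only when $s\ge C\log n$: for bounded $s$ the probability that one fixed pair of lists intersects in more than $s$ colors is itself a constant --- for $s=1$ the intersection size is roughly Poisson with mean $\tfrac12$, so a pair is bad with probability about $0.09$ --- hence the union bound is off by a factor of order $n^2$, and the alteration repair fails just as badly: you expect $\Theta(n^2)$ bad pairs but can afford to discard only about $n$ lists. The fallback you invoke, a ``semi-random nibble/random-greedy packing argument,'' is not a citable routine step here: the blocks have size $\Theta(\sqrt{sn})$, growing with $n$, which lies outside the standard R\"odl nibble (fixed uniformity) and outside Pippenger--Spencer as usually stated. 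For $s=1$ what you need is a near-optimal partial Steiner system with block size $\Theta(\sqrt n)$ on $n-1$ points (achievable via transversal designs/MOLS plus prime-gap estimates, but this is real work and only asymptotic), and for general small $s$ it is essentially the ``abundant packings'' problem to which the companion paper \cite{FKK} is devoted. Moreover, the theorem asserts $\sqrt{sn/2}\le\chi_{\ell}(K_n,s)$ for \emph{every} pair of integers $s\le n$, whereas any nibble-type argument is intrinsically asymptotic. As written, your proposal proves the lower bound only in the range $s=\Omega(\log n)$ (where, to your credit, it recovers exactly the constant $1/\sqrt 2$); elsewhere it is a plan, not a proof.
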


So the ratio of the upper and lower bounds is $2\sqrt{e}\sim 3.29$. In~\cite{FKK},
 the asymptotics of $\chi_{\ell} ({K_n,s})$
 for every fixed $s$ was found.

\begin{theo}[\cite{FKK}]\label{thm2} For every fixed $s$,
$\lim_{n\rightarrow \infty} \frac{\chi_{\ell} (K_n,s)}{\sqrt{sn}} = 1$.
\end{theo}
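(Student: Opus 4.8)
The plan is to prove matching upper and lower bounds, $\chi_\ell(K_n,s)\le(1+o(1))\sqrt{sn}$ and $\chi_\ell(K_n,s)\ge(1-o(1))\sqrt{sn}$, so that their ratio to $\sqrt{sn}$ tends to $1$.

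For the upper bound I would first reduce coloring to a matching problem: since $K_n$ is complete, a proper coloring is exactly an assignment of \emph{distinct} colors, i.e.\ a system of distinct representatives (SDR) for the lists $L(v_1),\dots,L(v_n)$. By Hall's theorem an SDR exists iff $|\bigcup_{v\in S}L(v)|\ge|S|$ for every $S\subseteq V$, so it suffices to verify Hall's condition for every $s$-separated $l$-list once $l$ is slightly above $\sqrt{sn}$. Fix $S$ with $|S|=m$, let $d_c$ be the number of lists in $S$ containing the color $c$, and put $N_S=|\bigcup_{v\in S}L(v)|$. Then $\sum_c d_c=ml$, and by $s$-separation $\sum_c\binom{d_c}{2}=\sum_{\{u,v\}\subseteq S}|L(u)\cap L(v)|\le\binom{m}{2}s$. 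Convexity of $x\mapsto\binom{x}{2}$ (equivalently Cauchy--Schwarz) gives $\sum_c\binom{d_c}{2}\ge N_S\binom{ml/N_S}{2}$, and combining the two bounds yields $N_S\ge \frac{ml^2}{(m-1)s+l}$. Hence $N_S\ge m$ as soon as $l^2-l\ge(m-1)s$, which over all $m\le n$ reduces to the single inequality $l^2-l\ge(n-1)s$. This holds for every $l\ge(1+\varepsilon)\sqrt{sn}$ and $n$ large, so every such list is colorable and $\chi_\ell(K_n,s)\le(1+o(1))\sqrt{sn}$.

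For the lower bound I would construct, for $l=(1-\varepsilon)\sqrt{sn}$, an $s$-separated $l$-list with no SDR. The cleanest obstruction is to make Hall fail on all of $V$: it suffices to exhibit $n$ blocks $L(v_1),\dots,L(v_n)$, each an $l$-subset of a color set $C$ with $|C|=N<n$, such that $|L(u)\cap L(v)|\le s$ for all pairs; then $|\bigcup_v L(v)|\le N<n$ and no SDR exists. The same convexity computation as above shows that such a configuration forces $l\le(1+o(1))\sqrt{sN}\le(1+o(1))\sqrt{sn}$, so the target parameters sit exactly at the extremal threshold: the blocks must behave like those of a (near) symmetric/affine design, with almost every pairwise intersection equal to $s$ and each color in about $l$ blocks. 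For $s=1$ this is realized exactly by the lines of an affine plane $AG(2,q)$: there are $q^2+q$ lines of size $q$ over $q^2$ points, pairwise meeting in at most one point, giving $n=q^2+q>q^2=N$ and $l=q=(1-o(1))\sqrt{n}$.

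For general fixed $s$ no such exact design is available with $N\to\infty$, so I would produce the required near-optimal packing by a randomized, nibble-type construction, selecting $l$-blocks iteratively so that every pairwise intersection stays at most $s$ while the union stays below $n$. The hard part is precisely this step: at the critical density $l\approx\sqrt{sN}$ the expected intersection of two random $l$-subsets is already $\approx s$, so each of the $\binom{n}{2}$ pairs violates the separation requirement with a \emph{constant} probability; consequently a naive random choice, random choice with deletions, or the Lov\'asz Local Lemma all fail to control all pairs at once. One must instead build the family so that the intersection sizes are nearly deterministic, and it is this low-variance construction that upgrades the constant from the $1/\sqrt{2}$ of Theorem~\ref{thm1} to the optimal $1$. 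Finally, minor number-theoretic bookkeeping (choosing a prime power $q$ with $q^2$ close to $n$, via the density of prime powers) together with the monotonicity of $\chi_\ell(K_n,s)$ in $n$ lets one pass from these special values of $n$ to all large $n$, completing the proof that the limit equals $1$.
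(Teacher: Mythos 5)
The first thing to note is that this paper does not prove Theorem~\ref{thm2} at all: it is quoted from the separate manuscript~\cite{FKK}, so there is no in-paper proof to compare yours against; I can only assess your argument on its own terms. Your upper bound is correct and complete: the reduction of $L$-colorability of $K_n$ to a system of distinct representatives, the verification of Hall's condition via $\sum_c \binom{d_c}{2}\le \binom{m}{2}s$ and convexity, and the resulting sufficient condition $l^2-l\ge (n-1)s$ are all sound, and they give $\chi_{\ell}(K_n,s)\le (1+o(1))\sqrt{sn}$. Your lower-bound framework is also the right one: exhibiting $n$ sets of size $l$, drawn from fewer than $n$ colors, with pairwise intersections at most $s$ (an ``abundant packing,'' which is exactly the notion appearing in the title of~\cite{FKK}) makes Hall's condition fail on all of $V$, and your $s=1$ instance via the lines of $AG(2,q)$, combined with the density of prime powers and monotonicity of $\chi_{\ell}(K_n,s)$ in $n$, is a complete proof for $s=1$.

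The gap is that for general fixed $s$ --- which is the actual content of the theorem --- you stop exactly at the decisive step. You correctly diagnose why it is hard (at $l\approx\sqrt{sN}$ two random $l$-sets intersect in about $s$ points on average, so each of the $\binom{n}{2}$ pairs violates the separation condition with constant probability, and naive randomness, deletion, and the Local Lemma all fail), but you then merely assert that a ``nibble-type,'' ``low-variance'' construction exists, without constructing anything or proving any existence statement. That assertion \emph{is} the theorem: the entire difficulty of raising the constant $1/\sqrt{2}$ of Theorem~\ref{thm1} to $1$ lies in producing near-optimal abundant packings for every fixed $s$, and nothing in your proposal does so. (Also, your remark that no exact design is available is only partly right: the affine hyperplanes of $AG(d,q)$ form an exact abundant packing with $N=q^d$ points, $n=q^d+q^{d-1}+\cdots+q>N$ blocks, block size $l=q^{d-1}$ and pairwise intersections at most $q^{d-2}$, so that $l=\sqrt{sN}$ exactly when $s=q^{d-2}$; but such algebraic constructions cover only these special values of $s$, so for arbitrary fixed $s$ the construction really does have to be invented, which is what~\cite{FKK} is about.) As it stands, the proposal establishes the upper bound for all $s$ but the matching lower bound only for $s=1$ and for special values of $s$; the general lower bound remains a genuine missing piece.
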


Since $\chi_{\ell} ({K_n})=\chi(K_n)=n$, Theorems~\ref{thm1} and~\ref{thm2} show that for
fixed $s$ and large $n$, $\chi_{\ell} (K_n,s)$ is much less than $\chi_{\ell} ({K_n})$.
In this paper, we study list colorings from $s$-separated lists of balanced complete multipartite graphs
and uniform hypergraphs. It turns out that even for small $s$, $\chi_{\ell} (K_{n,n},s)$
and $\chi_{\ell} (K_{n,n})$ are asymptotically the same.
 Let $K(k,m)=K_{m,m,...,m}$ denote the complete multipartite graph with $k$ partite sets  of size $m$.
One of our main results is

 \begin{theo}\label{t3'} For every fixed $k$, \\
 $ \chi_{\ell}(K(k,m),1)=(1+o(1))\chi_{\ell}(K(k,m))=(1+o(1))\log_{k/(k-1)} m$.
 \end{theo}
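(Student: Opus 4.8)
It is convenient to reformulate colourability. The parts of $K(k,m)$ are independent sets, so a proper colouring from a list assignment $L$ with colours in $[N]$ is exactly a map $g\colon[N]\to[k]$ (read $g(c)=i$ as ``$c$ is a colour used on part $i$'') for which $L(v)\cap g^{-1}(i)\neq\emptyset$ for every vertex $v$ in every part $i$; a colour assigned to no part never helps, so $g$ may be taken total. Hence $L$ is uncolourable precisely when every $g$ leaves some list disjoint from its own class. I would prove the theorem by establishing three things: an upper bound $\chi_{\ell}(K(k,m))\le(1+o(1))\log_{k/(k-1)}m$, a matching ordinary lower bound, and a separated lower bound $\chi_{\ell}(K(k,m),1)\ge(1-o(1))\log_{k/(k-1)}m$; together with \eqref{j1} these sandwich both quantities.

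For the upper bound, pick $g$ uniformly at random. A given vertex of part $i$ has $L(v)\cap g^{-1}(i)=\emptyset$ with probability $((k-1)/k)^{\ell}$, so a union bound over the $km$ vertices yields a valid $g$ as soon as $km((k-1)/k)^{\ell}<1$, i.e.\ $\ell>\log_{k/(k-1)}(km)$. Thus $\chi_{\ell}(K(k,m))\le(1+o(1))\log_{k/(k-1)}m$, and by \eqref{j1} the same bound holds for $\chi_{\ell}(K(k,m),1)$.

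For the ordinary lower bound, fix a small $\varepsilon>0$, set $\ell=(1-\varepsilon)\log_{k/(k-1)}m$, take a colour set of size $N=c(km)^{\varepsilon}$ for a small constant $c$, and give each vertex an independent uniform $\ell$-subset of $[N]$. The expected number of maps $g$ colouring the assignment is, after grouping $g$ by the vector $p=(p_1,\dots,p_k)$ of relative class sizes, $\sum_{p}\binom{N}{p_1N,\dots,p_kN}\prod_{i}\bigl(1-\tbinom{N-p_iN}{\ell}\big/\tbinom{N}{\ell}\bigr)^{m}$. Since $\ell\ll N$, the $i$-th factor is $(1-(1-p_i)^{\ell})^{m}(1+o(1))$, and each summand behaves like $e^{N\Phi(p)}$ with $\Phi(p)=H(p)+\frac{m}{N}\sum_i\log\bigl(1-(1-p_i)^{\ell}\bigr)$, where $H$ is the Shannon entropy. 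I would check that $\Phi$ is maximised at the balanced vector $p_i\equiv1/k$, where $\Phi=\log k-(km)^{\varepsilon}/N+o(1)$; with $N=c(km)^{\varepsilon}$ and $c$ small this is negative and in fact $N\Phi\to-\infty$, so the expectation tends to $0$ and an uncolourable $\ell$-assignment exists. Hence $\chi_{\ell}(K(k,m))\ge(1-\varepsilon)\log_{k/(k-1)}m$, which with the upper bound gives $\chi_{\ell}(K(k,m))=(1+o(1))\log_{k/(k-1)}m$.

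The lower bound with separation is the heart of the matter, and here I would use the following reformulation. If two lists from different parts both contain a pair $\{x,y\}$, they already share two colours, so every pair of colours is covered by lists of a single part only; recording these pairs yields edge-disjoint graphs $H_1,\dots,H_k$ on $[N]$, and a $1$-separated assignment is exactly a choice of $m$ $\ell$-cliques inside each $H_i$ (cliques within one $H_i$ may overlap freely, reflecting that a part is an independent set). Uncolourability becomes: for every $g$, some chosen clique of some $H_i$ lies inside $[N]\setminus g^{-1}(i)$. The plan is to build the $H_i$ at random—essentially a random $k$-colouring of the edges of a complete graph on $[N]$ with $N=m^{\Theta(\varepsilon)}$—and to rerun the first moment over $g$ as above, now with the list distributions supported on the $\ell$-cliques of the $H_i$. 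The main obstacle is a tension already visible in this picture: uncolourability forces the vertex supports of the $H_i$ to overlap heavily (near-disjoint supports make the system colourable, by a Hall-type argument on the almost-disjoint clique families), whereas edge-disjointness limits how clique-rich a graph with widely spread support can be, so a plain random edge-colouring runs out of monochromatic $\ell$-cliques before $\ell$ reaches the optimal value (a Ramsey-type limitation). Overcoming this—producing edge-disjoint graphs that are simultaneously spread out and clique-rich, so that the constant is pushed up to the optimal $1$—is the step I expect to be hardest, and it is where a more careful (pseudorandom or algebraic) construction, rather than a plain random colouring, should be needed.
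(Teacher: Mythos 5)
Your reformulation and upper bound are correct and match the paper: your map $g\colon[N]\to[k]$ is exactly the paper's condition (*), and your union bound is the paper's proof of (\ref{q1}). Your ``ordinary lower bound'' step is redundant (as you half-note yourself): by (\ref{j1}), any lower bound on $\chi_{\ell}(K(k,m),1)$ transfers to $\chi_{\ell}(K(k,m))$, so only the separated lower bound is needed. But that separated lower bound is the entire content of the theorem, and it is precisely the step you do not prove. Your translation of $1$-separation into edge-disjoint graphs $H_1,\dots,H_k$ carrying $\ell$-cliques is equivalent to the paper's formulation (nearly disjoint $r$-uniform hypergraphs $H_1,\dots,H_k$ whose edges are the lists), and your diagnosis that a plain random edge-colouring of $K_N$ cannot supply enough monochromatic $\ell$-cliques is sound --- but ending with ``a more careful (pseudorandom or algebraic) construction should be needed'' leaves a genuine gap exactly at the heart of the matter.

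What fills the gap in the paper is neither algebraic nor a second first-moment argument over $g$, but a covering construction (Lemma~\ref{lem1}) plus iteration (Corollary~\ref{cor1}). Lemma~\ref{lem1} produces, for any $0<a<1$ and $q\geq r^2/a$, an $r$-uniform hypergraph on a vertex set split into $q$ groups, whose edges are transversal to the groups, with independence number below $a\cdot(\mbox{vertex count})$ and only $4(1/a)^r\cdot(\mbox{vertex count})$ edges; its proof applies Lov\'asz's greedy bound $\tau(G)\le(1+\ln\Delta)|V(G)|/u$ (Theorem~\ref{thm4}) to an auxiliary hypergraph whose vertices are the transversal $r$-sets and whose edges are the families $E_X$ over all subsets $X$ of size $\lceil a|S|\rceil$. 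Near-disjointness then comes for free from structure, not randomness: at step $i$ of the iteration the new hypergraph's edges are transversal to the $q$ copies of the previous vertex set, while every older hypergraph's edges lie inside a single copy, so any two edges from different hypergraphs meet in at most one vertex. Taking $a=(k-1)/k$, this yields $k$ nearly disjoint $r$-uniform hypergraphs, each with at most $4(\frac{k}{k-1})^r(2r^2)^k$ edges and independence number below $\frac{k-1}{k}|V|$; then \emph{every} $k$-colouring of $V$ fails deterministically, since the smallest colour class has size at most $|V|/k$ and its complement must contain a full edge of the corresponding hypergraph. So the edge count $m\approx(\frac{k}{k-1})^r\mathrm{poly}(r)$, which drives the asymptotics $r=(1-o(1))\log_{k/(k-1)}m$, is achieved by an iterated covering-design argument; without a lemma of this type your outline does not reach the stated bound.
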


In view of (\ref{j1}), this means that for any $s$ and any fixed $k$,
$$\lim_{m\to\infty}\frac{\chi_{\ell}(K(k,m),s)}{\log_{k/(k-1)} m}=1.$$

We also prove a result of similar nature for balanced complete $k$-uniform $k$-partite hypergraphs.
Recall that a hypergraph is $k$-{\em partite} if its vertex set can be partitioned into $k$ sets
$V_1,\ldots,V_k$ so that each edge contains at most one vertex from each set.
 A $k$-partite (hyper)graph is {\em balanced} if all parts have equal sizes.

 Let $K^k(k,m)=K^k_{m,m,...,m}$ denote the complete $k$-uniform $k$-partite
hypergraph with  partite sets  of size $m$. Our second main result is:

 \begin{theo}\label{t4'} For every fixed $k$,\\
  $ \chi_{\ell}(K^k(k,m),1)=(1+o(1))\chi_{\ell}(K^k(k,m))=(1+o(1))\log_{k} m$ .
 \end{theo}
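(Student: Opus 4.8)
\section*{Proof proposal for Theorem~\ref{t4'}}

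The plan rests on a clean reformulation of proper colorings of $K^k(k,m)$ in terms of partitions of the palette. Write $V_1,\dots,V_k$ for the parts and recall that, since an edge picks one vertex from each part, a coloring is proper exactly when no color occurs in all $k$ parts. I claim that for any list assignment $L$ with total palette $C=\bigcup_v L(v)$, the hypergraph $K^k(k,m)$ is $L$-colorable if and only if there is an ordered partition $C=C_1\cup\cdots\cup C_k$ such that no vertex $v\in V_i$ has $L(v)\subseteq C_i$. For the ``if'' direction, color each $v\in V_i$ by any element of $L(v)\setminus C_i$ (nonempty by hypothesis); then no color of $C_i$ is used on $V_i$, so no color appears in every part. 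For the ``only if'' direction, given a proper coloring let $U_i$ be the colors used on $V_i$; since $\bigcap_i U_i=\varnothing$, each color lies outside some $U_i$, and assigning it to a class $C_i$ with $c\notin U_i$ yields a partition with $C_i\cap U_i=\varnothing$, so no full list sits inside its own class. By~(\ref{j1}) we have $\chi_{\ell}(K^k(k,m),1)\le\chi_{\ell}(K^k(k,m))$, so it suffices to prove the upper bound $\chi_{\ell}(K^k(k,m))\le(1+o(1))\log_k m$ and the matching lower bound $\chi_{\ell}(K^k(k,m),1)\ge(1-o(1))\log_k m$; these sandwich both parameters.

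For the upper bound I would apply the characterization probabilistically. Fix $\ell\ge(1+\varepsilon)\log_k m$ and an arbitrary $\ell$-list $L$, and assign each color independently to one of the $k$ classes uniformly at random. A fixed vertex $v\in V_i$ is ``bad'' (its whole list falls in $C_i$) with probability $k^{-\ell}$, so the expected number of bad vertices is at most $km\cdot k^{-\ell}=k\,m^{-\varepsilon}<1$ for large $m$. Hence a good partition exists, and by the ``if'' direction above $L$ admits a proper coloring. This gives $\chi_{\ell}(K^k(k,m))\le(1+\varepsilon)\log_k m$ for every $\varepsilon>0$, hence the desired asymptotic upper bound, and via~(\ref{j1}) the same bound for the $1$-separated parameter.

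The substance is the lower bound, where I must exhibit a $1$-separated $\ell$-list with $\ell=(1-o(1))\log_k m$ that defeats \emph{every} partition. The natural template is a random construction: fix a palette $[N]$ and give each vertex an independent uniformly random $\ell$-subset. For a fixed partition the probability that it is good factors across parts, and the balanced partition $|C_i|=N/k$ is the bottleneck, contributing roughly $\exp\bigl(-\Theta(m\,k^{1-\ell})\bigr)$; taking a union bound over the $k^N$ partitions succeeds provided $N=m^{o(1)}$, which is compatible with $\ell=(1-o(1))\log_k m$. This already yields the \emph{unseparated} lower bound $\chi_{\ell}(K^k(k,m))\ge(1-o(1))\log_k m$. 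To impose $1$-separation, note that the condition ``$|L(u)\cap L(v)|\le1$ for all cross-part $u,v$'' is equivalent to requiring that the pair-supports $P_i=\bigl\{\{c,c'\}:\{c,c'\}\subseteq L(v)\text{ for some }v\in V_i\bigr\}$ be pairwise disjoint across the parts. My plan is therefore to realize each part's list family as a family of $\ell$-cliques living in an edge-disjoint graph $G_i$ on $[N]$, where $\binom{[N]}{2}=E(G_1)\cup\cdots\cup E(G_k)$; then any two cross-part lists share at most one color automatically, since a common pair would lie in two distinct $G_i$'s.

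The hard part is exactly this reconciliation. Enforcing disjoint pair-supports, while still giving each part enough lists ($\approx k^{\ell}\approx m$ of them) to kill all partitions, appears to push the palette up to $N\gtrsim\sqrt{m}$ through a packing/Turán-type constraint, and at $N\approx\sqrt m$ the naive union bound over partitions degrades to the weaker threshold $\ell\approx\tfrac12\log_k m$. Overcoming this $\sqrt m$ barrier is the crux: the separation constraint favors a large, spread-out palette (small pairwise intersections) whereas the partition count favors a small palette. I expect the resolution to come not from a cruder union bound but from a sharper, \emph{joint} analysis of the edge-partitioned families---exploiting that the independent sets $A_i=C_i$ must simultaneously avoid $\ell$-cliques in the edge-disjoint graphs $G_i[A_i]$, and that for a random balanced edge-partition each $G_i[A_i]$ behaves like a dense random graph forced to contain such a clique once $|A_i|$ exceeds the clique threshold. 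Controlling the clique probabilities via Janson-type concentration (or a local-lemma alteration to clean up the rare cross-pairs of intersection $\ge2$), and optimizing $N$ against $\ell$ so that no partition leaves every part below threshold, is where the real work lies; carrying this through to the constant $1$ rather than $\tfrac12$ is the main obstacle of the whole argument.
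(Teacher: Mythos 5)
Your reformulation of $L$-colorability of $K^k(k,m)$ as a palette-partition problem and your probabilistic upper bound are correct and essentially identical to the paper's proof of (\ref{q1'}); by (\ref{j1}) only the $1$-separated lower bound then remains. That is exactly where your proposal has a genuine gap: you never construct the $1$-separated $\ell$-list system with $\ell=(1-o(1))\log_k m$ that defeats every partition. You describe the obstruction (your ``$\sqrt m$ barrier'') and name candidate tools (Janson-type concentration, a local-lemma alteration), but none of this is carried out, and you say yourself that the crux is unresolved; as written, your argument only yields a threshold near $\tfrac12\log_k m$, which does not prove the theorem.

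Moreover, the barrier you run into is an artifact of a hidden over-requirement, and recognizing this is the missing idea. Your packing estimate $N\gtrsim\sqrt m$ presumes that each part needs on the order of $m$ lists, each consuming $\binom{\ell}{2}$ fresh pairs of the palette. But $1$-separation in $K^k(k,m)$ constrains only pairs of vertices lying in a common edge, i.e.\ vertices in \emph{different} parts; within one part, lists may share pairs arbitrarily and may even coincide (only the set of distinct lists matters for defeating all partitions). The paper exploits exactly this freedom: the palette has size $q^k$ with $q=kr^2$, i.e.\ polylogarithmic in $m$; the lists of part $i$ are the edges of an $r$-uniform hypergraph $H_i$ on that palette with at most $4k^rq^k$ edges and independence number less than $|V|/k$, so that in any $k$-partition the largest class contains a whole list of the corresponding part; and near-disjointness of $H_1,\dots,H_k$ is obtained structurally by an iterated product construction (Lemma~\ref{lem1} and Corollary~\ref{cor1}): at stage $i$ the new hypergraph's edges are transversals of the $q$ copies of the previous vertex set, while all older hypergraphs' edges live inside single copies, so cross-part lists automatically meet in at most one color. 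The existence of such a transversal hypergraph with few edges and small independence number is proved not by Janson or the local lemma but by a covering argument using Lov\'asz's bound on the transversal number (Theorem~\ref{thm4}). Since the palette stays polylogarithmic in $m$, there is no tension between separation and the partition count, and one obtains: if $m\ge 4k^r(kr^2)^k$ then $\chi_{\ell}(K^k(k,m),1)\ge r+1$, i.e.\ the constant is $1$, not $\tfrac12$.
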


The upper bounds in Theorems~\ref{t3'} and~\ref{t4'} were known. To prove the lower bounds,
we need constructions of several uniform {\bf nearly disjoint} hypergraphs on the same vertex set
each of which has  small independence number. We show that such probabilistic constructions
 are possible and present them in the next section.
We think that these constructions are of interest by themselves.
Using these construction and the approach to list
 colorings used in~\cite{ERT} and later in~\cite{K}, we prove Theorems~\ref{t3'}
 and~~\ref{t4'} in Sections~\ref{s2} and~\ref{s3}, respectively.
Although we use only basic probabilistic tools, namely the first moment method, our asymptotics 
 for the nearly disjoint case are very close to the classical ones.

\section{Nearly disjoint hypergraphs with small independence number}\label{s1}

Hypergraphs $H_1$ and $H_2$ are \textit{nearly disjoint}
 if every edge of $H_1$ meets every edge of $H_2$ in at most one vertex.
Hypergraphs $H_1, H_2,\cdots, H_k$ are nearly disjoint if they are pairwise nearly disjoint.\medskip

 For a hypergraph $H$,  $\Delta(H)$ denotes the {\em maximum degree} of the vertices in $H$,
$\alpha(H)$ denotes the {\em independence number of} $H$, i.e. the size of  a largest subset of
 vertices of $H$ not containing edges of $H$, and $\tau(H)=|V(H)|-\alpha(H)$ denotes
  the {\em transversal number} of $H$.
  We are interested in constructing $k$ nearly disjoint hypergraphs each of size $m$ with
small independence numbers.  We first cite two results and prove a lemma
which we then use to construct these nearly disjoint hypergraphs.

  The following theorem is due to Erd\H os~\cite{E}. He proved it for the case when $k=2$,
but his proof can easily be extended to any fixed $k$.

\begin{theo}[\cite{E}]\label{thm3}
Let $k \ge 2$ be fixed. For $r$ sufficiently large, there exists $r$-uniform hypergraphs $H$
 on $n=\lceil \frac{k-1}{2}r^2 \rceil$ vertices with at most
 $\frac{e }{2}r^2k^{r-1}(k-1)\ln k$ edges such that $\alpha(H) < n/k$.
\end{theo}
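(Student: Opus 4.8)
The plan is to build $H$ by the first moment method as a random $r$-uniform hypergraph on the vertex set $[n]$, $n=\lceil\frac{k-1}{2}r^2\rceil$. Writing $t=\lceil n/k\rceil$, the requirement $\alpha(H)<n/k$ is precisely that every $t$-subset of $[n]$ contains an edge. I would let $H$ consist of $M$ edges drawn independently and uniformly among all $\binom{n}{r}$ possible $r$-sets (with $M$ to be fixed), and show that for a suitable $M$ not exceeding the claimed bound the probability that some $t$-set is independent is strictly less than $1$; any outcome avoiding all such sets is an $H$ with at most $M$ edges and $\alpha(H)<n/k$.

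The union bound gives the skeleton. For a fixed $t$-set $S$ a single random edge misses $S$ with probability $1-p$, where $p=\binom{t}{r}/\binom{n}{r}$, so $\Po[S\text{ independent}]=(1-p)^M\le e^{-pM}$ and the expected number of independent $t$-sets is at most $\binom{n}{t}e^{-pM}$; hence any $M>\ln\binom{n}{t}/p$ works. The heart of the matter is a sharp estimate of $p$. Writing $p=\prod_{i=0}^{r-1}\frac{t-i}{n-i}$, I would extract the factor $(t/n)^r=(1+o(1))k^{-r}$ and estimate $\sum_{i=0}^{r-1}\bigl[\ln(1-i/t)-\ln(1-i/n)\bigr]\approx-\frac{r^2}{2}\bigl(\tfrac1t-\tfrac1n\bigr)$; since $n\approx\frac{k-1}{2}r^2$ and $t\approx n/k$ this sum tends to $-1$, so that $p=(1+o(1))e^{-1}k^{-r}$. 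This $e^{-1}$ is exactly what yields the constant $e$ in the theorem. Together with $\ln\binom{n}{t}=(1+o(1))\,nH(1/k)$, where $H(x)=-x\ln x-(1-x)\ln(1-x)$, this already produces an $H$ with $\alpha(H)<n/k$ and of the right order $r^2k^{r}$ edges.

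The one delicate point, which I expect to be the main obstacle, is pinning the leading constant to exactly $\frac{e}{2}$, that is, replacing $kH(1/k)=\ln k+(k-1)\ln\frac{k}{k-1}$ by $\ln k$. The clean way to see where $\frac{e}{2}r^2k^{r-1}(k-1)\ln k$ comes from is to pass to colorings: instead of killing independent $t$-sets, one bounds the expected number of proper $k$-colorings. A random edge is monochromatic under a balanced $k$-coloring with probability $\approx kp$ (it may fall into any of the $k$ classes), so a fixed balanced coloring is proper with probability $(1-kp)^M$, and with at most $k^{n}$ colorings the choice $M>\frac{n\ln k}{kp}=\frac{e}{2}(k-1)r^2k^{r-1}\ln k$ forces $\chi(H)>k$; the extra factor $k$ in the exponent is precisely what removes the spurious additive term and realizes the constant $\frac{e}{2}$. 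Reconciling this coloring count with the stronger independence statement $\alpha(H)<n/k$ --- equivalently, extracting Erd\H os's sharper bound rather than the entropy bound of the naive union --- is the genuinely careful step; but all of the estimates above are uniform in the fixed parameter $k$, so once the $k=2$ case is in hand the extension to every fixed $k\ge2$ is routine, as the authors assert.
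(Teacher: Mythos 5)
The paper does not actually prove this theorem: it quotes it from Erd\H os's 1964 paper with the remark that the $k=2$ argument extends to any fixed $k$, so your proposal has to be judged against that classical first-moment argument. Your quantitative skeleton is right: with $n=\lceil\frac{k-1}{2}r^2\rceil$ and $t=\lceil n/k\rceil$ one indeed gets $p=\binom{t}{r}/\binom{n}{r}=(1+o(1))e^{-1}k^{-r}$, and this (together with the fact that $n\approx\frac{k-1}{2}r^2$ is the optimal choice of vertex number) is exactly where the factor $e$ comes from. The problem is that your two counts prove two different theorems, neither of which is the stated one. The union bound over $t$-sets gives the conclusion $\alpha(H)<n/k$, but with $(1+o(1))\frac{e}{2}r^2k^{r-1}(k-1)\left(\ln k+(k-1)\ln\frac{k}{k-1}\right)$ edges --- for $k=2$, twice the claimed bound. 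The union bound over colorings gives the claimed edge count $\frac{e}{2}r^2k^{r-1}(k-1)\ln k$, but its conclusion is only that every $k$-coloring has a monochromatic edge, i.e.\ $\chi(H)>k$, which is strictly weaker than $\alpha(H)<n/k$ (uncolorability does not prevent large independent sets). The theorem demands the stronger conclusion at the smaller edge count; the ``reconciliation'' you defer to your final sentence is therefore not a finishing touch but the entire assertion, and it is never carried out.

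Worse, it cannot be carried out within your model. At $M=\frac{e}{2}r^2k^{r-1}(k-1)\ln k$ the expected number of independent $t$-sets is $\binom{n}{t}(1-p)^M=\exp\left((1+o(1))\,n\,\frac{k-1}{k}\ln\frac{k}{k-1}\right)\to\infty$, so no first-moment argument over independent sets can yield $\alpha(H)<n/k$ at this density; moreover, since for two $t$-sets sharing $j$ vertices the correlation is governed by $\binom{j}{r}/\binom{t}{r}\le(j/t)^r$, which is negligible unless the sets nearly coincide, a routine second-moment computation shows that independent $t$-sets actually exist with high probability --- the random hypergraph itself fails, not merely the estimate. So the honest outputs of the method are exactly your two intermediate statements: Erd\H os's theorem is the coloring one (for $k=2$, the classical property-B bound $\approx\frac{e\ln 2}{4}r^22^r$), while the independence conclusion costs the entropy constant. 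A defensible write-up would prove the statement with $\ln k$ replaced by $\ln k+(k-1)\ln\frac{k}{k-1}$ --- this weaker form suffices for the only use the paper makes of the theorem, in the Remark after its Construction --- and note that the literal statement, which couples $\alpha(H)<n/k$ with the coloring constant, does not follow from the cited argument. Two smaller points: in the coloring count you must handle unbalanced colorings via convexity, $\sum_i\binom{a_i}{r}\ge k\binom{n/k}{r}$, not only the balanced one; and your closing remark that the passage from $k=2$ to general $k$ is routine is true but beside the point, since the real difficulty is independent of $k$.
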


  The following theorem is a partial case of a more general result by Lov{\'a}sz~(Corollary 2 in~\cite{L}).

\begin{theo}[\cite{L}]\label{thm4}
Let $u$ be the minimum size of an edge in a hypergraph $G$ with maximum degree $\Delta$. Then
$$\tau(G)\le (1+1/2+\cdots + 1/\Delta){|V(G)|}/u.$$
\end{theo}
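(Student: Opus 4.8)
The plan is to prove this by a fractional-relaxation and greedy-covering argument, which is the natural route to the harmonic constant $H(\Delta) := 1 + \tfrac12 + \cdots + \tfrac1\Delta$. First I would recast the transversal number as a set-cover problem: since a transversal is exactly the complement of an independent set, $\tau(G)$ equals the least number of vertices hitting every edge of $G$. Viewing each vertex $v$ as the ``set'' $S_v = \{e \in E(G) : v \in e\}$ of edges through it, a transversal is precisely a family of vertices whose sets $S_v$ cover the universe $E(G)$, and the maximum set size $\max_v |S_v|$ is exactly the maximum degree $\Delta$.

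Next I would produce a cheap fractional transversal to bound the fractional optimum $\tau^*(G)$, the minimum of $\sum_v x_v$ over $x \ge 0$ subject to $\sum_{v \in e} x_v \ge 1$ for every edge $e$. The obvious candidate is the uniform weighting $x_v = 1/u$ for all $v$: because every edge has at least $u$ vertices, $\sum_{v\in e} x_v = |e|/u \ge 1$, so this $x$ is feasible and has value $|V(G)|/u$. Hence $\tau^*(G) \le |V(G)|/u$, and it remains only to pay the integrality gap.

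The heart of the argument, and the step I expect to be the main obstacle, is the integrality bound $\tau(G) \le H(\Delta)\,\tau^*(G)$, which is exactly the content of Lov\'asz's covering theorem. I would obtain it by analyzing the greedy transversal that repeatedly selects a vertex covering the most still-uncovered edges. Charging each newly covered edge the ``price'' $1/a_i$, where $a_i$ is the number of edges freshly covered at step $i$, makes the size of the greedy transversal equal to the total price $\sum_{e} \mathrm{price}(e)$. The key lemma is that for any single vertex $w$ the prices of the edges in $S_w$ sum to at most $H(|S_w|) \le H(\Delta)$; this is where the harmonic number appears, and verifying it requires the standard but delicate observation that greedy never covers fewer fresh edges than $w$ itself could at any stage, so the prices telescope into a harmonic sum. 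Granting this lemma, for any feasible fractional cover $x^*$ one writes $\tau(G) = \sum_e \mathrm{price}(e) \le \sum_e \mathrm{price}(e)\sum_{w\colon e\in S_w} x^*_w = \sum_w x^*_w \sum_{e \in S_w}\mathrm{price}(e) \le H(\Delta)\sum_w x^*_w$, and minimizing over $x^*$ gives $\tau(G) \le H(\Delta)\,\tau^*(G)$. Combining this with $\tau^*(G)\le |V(G)|/u$ from the previous step yields $\tau(G) \le (1 + \tfrac12 + \cdots + \tfrac1\Delta)\,|V(G)|/u$, as claimed.
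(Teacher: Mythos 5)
Your proposal is correct, but there is nothing in the paper to compare it against: the paper does not prove this statement at all. It is quoted as a known result (a special case of Corollary~2 in Lov\'asz's paper [L]) and used as a black box in the proof of Lemma~1. What you have reconstructed is essentially the classical proof of Lov\'asz's covering theorem itself, and your two ingredients are exactly how the corollary is obtained from the main result of [L]: the uniform fractional transversal $x_v = 1/u$, feasible because every edge has size at least $u$, gives $\tau^*(G)\le |V(G)|/u$; and the greedy/price analysis gives the integrality bound $\tau(G)\le (1+1/2+\cdots+1/\Delta)\,\tau^*(G)$. The step you flag as the main obstacle — the lemma that for every vertex $w$ the prices of the edges through $w$ sum to at most $1+1/2+\cdots+1/\Delta$ — is indeed the crux, and the invariant you name is the right one and does close the gap: order the edges of $S_w$ by the time greedy covers them; when the $j$-th-from-last of them is about to be covered, $w$ still has at least $j$ uncovered edges, so the greedy step (which maximizes fresh coverage) covers at least $j$ fresh edges, making that edge's price at most $1/j$; summing over $j$ gives the harmonic bound, and your exchange-of-summation argument then transfers it to any feasible fractional cover. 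With that detail written out, your argument is a complete and correct proof of the cited theorem.
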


For a positive integer $r$ and a real number $b$,  the binomial coefficient 
 $\binom{b}{r}$ is defined as $\frac{1}{r!}b(b-1)\cdots (b-r+1)$.\medskip

Let $a_1,\ldots,a_n$   be nonnegative integers and  $a= \max_{i} a_i$. 
If $a>0$, then
\begin{equation}\label{fl1}
\sum_{i=1}^{n}\binom{a_i}{2}\le \sum_{i=1}^{n}\frac{a_i(a-1)}{2}= \binom{a}{2}\frac{\sum_{i=1}^{n} a_i }{a} .
\end{equation}

We now prove our main lemma. 

\begin{lem}\label{lem1}
For each  $0< a < 1$ and integers $t\geq 1, r\geq 2$ and $q \geq  \frac{r^2}{a}  $,
there exists an $r$-uniform hypergraph
$H$ with $tq$ vertices  such that\smallskip

\noindent (i) $V(H) = T_1\cup T_2 \cup,...,\cup T_q$, where $|T_1| =\ldots =|T_q|= t$ and disjoint,\\
(ii) every edge in $H$ meets every $T_i$ in at most one vertex,\\
(iii)  $\alpha(H)< a t q$,\\
(iv) $|E(H)|\leq (1/a)^r\cdot 4tq$.
\end{lem}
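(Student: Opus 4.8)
The plan is to prove Lemma~\ref{lem1} by a direct first--moment (random edge--inclusion) construction, letting the partite structure make conditions (i) and (ii) automatic. I would take $V(H)=T_1\cup\cdots\cup T_q$ with $|T_i|=t$, and call an $r$--subset a \emph{candidate edge} if it picks exactly one vertex from each of some $r$ distinct blocks; there are $\binom{q}{r}t^{r}$ candidate edges. Include each candidate edge independently with a probability $p$ to be fixed later. By construction every edge is a partial transversal of the partition, so (i) and (ii) hold, and $\mathbb{E}\,|E(H)|=p\binom{q}{r}t^{r}$. It then suffices to choose $p$ so that, with positive probability, $H$ simultaneously has no independent set of size $s:=\lceil atq\rceil$ (giving (iii)) and at most $4tq/a^{r}$ edges (giving (iv)); I will bound the probabilities of the two bad events and show their sum is below $1$.

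For the independence number, fix a set $S$ with $|S\cap T_i|=s_i$, so $\sum_i s_i=s$ and $0\le s_i\le t$. The number of candidate edges inside $S$ is the elementary symmetric polynomial $e_r(s_1,\dots,s_q)=\sum_{|I|=r}\prod_{i\in I}s_i$, hence $\Pr[S\text{ independent}]=(1-p)^{e_r(s_1,\dots,s_q)}$. The crucial estimate is the lower bound
\begin{equation*}
e_r(s_1,\dots,s_q)\ \ge\ \binom{s/t}{r}t^{r}\ \ge\ \binom{aq}{r}t^{r},
\end{equation*}
valid for every admissible profile, where $\binom{x}{r}$ is the real--argument binomial from the displayed definition. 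Since $e_r$ is multilinear, a two--coordinate smoothing argument shows it is minimized by the most concentrated profile (as many blocks as possible equal to $t$, one partial block, the rest empty), and evaluating there together with convexity of $x\mapsto\binom{x}{r}$ on $x\ge r-1$ gives the bound; for $r=2$ this is exactly inequality~(\ref{fl1}) with $a_i=s_i$ and $\max_i s_i\le t$. Note $\binom{aq}{r}\ge1$ because $q\ge r^{2}/a$ forces $aq\ge r^{2}\ge r$. A union bound then yields $\Pr[\alpha(H)\ge s]\le\binom{tq}{s}(1-p)^{\binom{aq}{r}t^{r}}$.

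To control the edge count I would combine this with two elementary facts. First, $\binom{tq}{s}\le(e/a)^{s}$, so $\ln\binom{tq}{s}\le(atq+1)\ln(e/a)\le tq+\ln(e/a)$, using $a\ln(e/a)\le1$. Second, the candidate--edge surplus is only a bounded factor larger than the ideal $a^{-r}$:
\begin{equation*}
\frac{\binom{q}{r}}{\binom{aq}{r}}=\prod_{i=0}^{r-1}\frac{q-i}{aq-i}
\ \le\ \frac{1}{a^{r}}\exp\!\Bigl(\sum_{i=0}^{r-1}\frac{i(1-a)}{aq-i}\Bigr)\ \le\ \frac{\sqrt{e}}{a^{r}},
\end{equation*}
where the last step uses $q\ge r^{2}/a$ to get $aq-i\ge r^{2}-r$, making the exponent at most $\tfrac12$. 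Introducing a target $c$ for the first bad event and setting $p=1-(c/\binom{tq}{s})^{1/(\binom{aq}{r}t^{r})}$ (automatically in $(0,1)$, so no separate check $p\le1$ is needed) gives $\Pr[\alpha(H)\ge s]\le c$ and, via $1-e^{-x}\le x$ and Markov, $\Pr[\,|E(H)|>4tq/a^{r}\,]\le \sqrt{e}\,\ln(\binom{tq}{s}/c)/(4tq)$. Optimizing $c$ makes the sum of the two probabilities at most $\frac{\sqrt e}{4tq}\bigl(\tfrac12+\ln\binom{tq}{s}+\ln(4tq)\bigr)$, which is strictly below $1$ for every admissible $a,t,r,q$; the slack in the constant $4$ (against the true leading constant $a\ln(e/a)\le1$) is exactly what absorbs the factor $\sqrt e$, the ceiling in $s$, and the Markov loss. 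A hypergraph $H$ satisfying (i)--(iv) therefore exists.

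The probabilistic idea is routine; the real work is in the constants. I expect the main obstacle to be twofold: establishing the sharp concentration bound $e_r\ge\binom{aq}{r}t^{r}$ (the smoothing/convexity step, of which (\ref{fl1}) is the $r=2$ case), and verifying that the edge--surplus ratio loses only the factor $\sqrt e$ over $a^{-r}$, where the hypothesis $q\ge r^{2}/a$ is used in an essential way. A secondary but genuine subtlety is that one must keep the exact survival probability $(1-p)^{M}$ rather than relaxing it to $e^{-pM}$: for small $r$ and small $a$ the former is far smaller, and the relaxation would spuriously force $p>1$. Once these points are in hand, balancing the two failure probabilities closes the argument.
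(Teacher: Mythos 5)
Your proof is correct, and it takes a genuinely different route from the paper's. The paper does not randomize the edge set at all: it forms an auxiliary hypergraph $\mathcal{H}$ whose vertices are your candidate (transversal) $r$-sets and whose edges are, for each $X\subset S$ with $|X|=\lceil a|S|\rceil$, the family $E_X$ of candidate $r$-sets inside $X$; the desired $H$ is then a minimum vertex cover of $\mathcal{H}$, and (iv) follows from Lov\'asz's covering bound (Theorem~\ref{thm4}) via $\tau(\mathcal{H})\le(1+\ln\Delta)\,|V(\mathcal{H})|/\min|E_X|$. Because of that black box, the paper only needs the crude estimate $|E_X|\ge\frac12\binom{|X|}{r}$, obtained by upper-bounding the non-transversal $r$-subsets of $X$ through inequality~(\ref{fl1}); your route instead needs the sharper direct lower bound $e_r(s_1,\dots,s_q)\ge\binom{aq}{r}t^r$, which your smoothing-plus-convexity argument does legitimately deliver (the minimum of the multilinear $e_r$ over the profile polytope sits at a concentrated profile, and by Pascal's identity your linear expression is exactly the chord of the convex function $\theta\mapsto\binom{n_0+\theta}{r}$ on $[0,1]$). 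What you buy is self-containedness: only a union bound and Markov's inequality, no covering theorem. What the paper buys is brevity and no delicate balancing of constants. Quantitatively the two arguments are parallel: your factor $\ln\bigl(\binom{tq}{s}/c\bigr)\approx tq$ plays precisely the role of the paper's factor $1+\ln\Delta\le|S|$, and both rest on the same binomial-ratio estimate $\binom{q}{r}/\binom{aq}{r}\le e^{(1-a)/2}a^{-r}$ enabled by $q\ge r^2/a$, which is why both land on the same constant $4$ in (iv). One step you assert but do not carry out: that $\frac{\sqrt e}{4tq}\bigl(\frac12+\ln\binom{tq}{s}+\ln(4tq)\bigr)<1$ for all admissible parameters. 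It is true, but it needs the hypothesis once more to control the $\ln(e/a)$ term hidden in $\ln\binom{tq}{s}$: from $1/a\le q/r^2\le tq/4$ one gets $\ln(e/a)\le1+\ln(tq/4)$, so with your bound $\ln\binom{tq}{s}\le tq+\ln(e/a)$ the claim reduces to $\frac32+2\ln(tq)<(4e^{-1/2}-1)\,tq$, which holds because $tq\ge q\ge5$; this verification should be written out to close the argument.
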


\begin{proof}
Let $S = T_1\cup T_2 \cup,...,\cup T_q$. 
To prove the lemma, we will construct an auxiliary hypergraph $\mathcal{H}$ whose
vertex set is
$$V(\mathcal{H})= \bigcup_{1\le i_1<i_2\cdots <i_r\le q } T_{i_1}\times T_{i_2}\times \cdots \times T_{i_r}.$$
For every set $X\subset S$, we consider the set
  $E_X$ of $r$-sets that are contained in $X$ and members of $V(\mathcal{H})$.
The sets $E_X$ for every
  set $X$ with $|X|=\lceil a|S|\rceil$ will form the edges of $\mathcal{H}$.
A vertex cover of this
hypergraph $\mathcal{H}$ gives us a collection of $r$-subsets of $S$ with
the property that if we take any set $X\subset S$ with $|X|>a|S|$, then
we get an $r$-set which is entirely contained in $X$. A minimum vertex cover
of $\mathcal{H}$
gives us our required hypergraph $H$ with vertex set $V(H)=S$ and $|E(H)|=\tau(\mathcal{H})$.\\

We first estimate the size of $E_X$. Let $A_X$ denote the number of
$r$-subsets of $X$ that meet some $T_i$ in at least two vertices. (Note that we may assume that $t\ge 2$, since $A_X=0$ if $t=1$.)
Then
$$
A_X \le \sum_{i=1}^{q} \binom{|X\cap T_i|}{2} \binom{|X|-2}{r-2}
   \le \frac{|X|}{t}\binom{t}{2}\binom{|X|-2}{r-2}
   = \frac{1}{2}\binom{|X|}{r}\frac{(t-1)r(r-1)}{|X|-1}
   \le \frac{1}{2}\binom{|X|}{r},$$
where the second inequality is due to (\ref{fl1}) and the last inequality is by the choice of $q$, since
$$(t-1)r(r-1)\le (t-1)r^2\le a(t-1)q\le atq-1\le |X|-1.
  $$

\noindent Hence $$ |E_X| = \binom{|X|}{r}-A_X \ge \frac{1}{2}\binom{|X|}{r}. $$

Applying Theorem~\ref{thm4} for $G=\mathcal{H}$ and $u=\min |E_X|$ we get,
$$\tau({\mathcal{H}})\le \frac{|V(\mathcal{H})|}{\min |E_X|}(1+\ln \Delta)
 \le \frac{\binom{|S|}{r}}{\frac{1}{2}\binom{|X|}{r}}(1+ \ln 2^{|S|})
  \le \frac{\binom{|S|}{r}}{\frac{1}{2}\binom{a|S|}{r}}(|S|)$$
$$= 2 \Big(\frac{1}{a}\Big)^r\frac{(|S|)(|S|-1)\cdots(|S|-(r-1))}
                     {(|S|)(|S|-\frac{1}{a})\cdots(|S|-(\frac{r-1}{a}))}|S|\le
 2 \Big(\frac{1}{a}\Big)^r |S| \prod_{0\leq i\leq r-1} \frac{|S|-i}{|S|-\frac{1}{a}i}.$$
Here the last product is less than 2, since
$$ \prod_{0\leq i\leq r-1} \frac{|S|-i}{|S|-\frac{1}{a}i}
    = \prod_{0\leq i\leq r-1} \Big(1+ \frac{ (1-a)i}{a|S|-i}\Big)
 \le \exp \frac{(1-a)\sum_{i<r} i}{r^2-r+1}
  < \exp \frac{1-a}{2}.
  $$
Hence $$\tau(\mathcal{H})\le (1/a)^r\cdot 4tq.$$
\end{proof}

\medskip
\noindent \textbf{Construction}: Iterative Method for constructing nearly disjoint hypergraphs:\\

Let an integer $q \geq  \frac{r^2}{a} $ be fixed. We start with a $q$-vertex empty hypergraph.
We use Lemma~\ref{lem1} and obtain a hypergraph $G_1^1$ such that $\alpha(G_1^1)<a|V(G_1^1)|$.
After $i-1$ more iterations, we have hypergraphs $G_1^i, G_2^i,..., G_i^i$,
where $G_j^i$ is just $q$ vertex disjoint copies of $G_j^{i-1}$ (where $j<i$)
and $G_i^i$ is obtained by taking $q$ copies of $V(G_{i-1}^{i-1})$ and using Lemma~\ref{lem1}.\medskip

\noindent Note that we have the following:\medskip

\noindent   1. $G_\alpha^i, G_\beta^i$ are nearly disjoint for all $\alpha\neq \beta$;\\
  2. $|V(G_j^i)| = q^{i}$ for all $j\le i$;\\
  3. $|E(G_j^i)| \le (1/a)^r\cdot 4\cdot q^{i-1}q = (1/a)^r\cdot 4q^{i}$ for all $j\le i$;\\
  4. $\alpha(G_j^i)< a|V(G_j^i)|$ for all $j\le i$.\\

\noindent \textbf{Remark}: Note that it the above construction we took a $q$-vertex empty hypergraph and applied Lemma~\ref{lem1} to it to get $G_1^1$. One can instead (for $r$ sufficiently large) start with the hypergraph $G_1^1$ given in Theorem~\ref{thm3} and slightly improve the result.\medskip

\noindent We have the following corollary from the above construction.

\begin{cor}\label{cor1}
Let $k\ge 2, r\ge 2$, $0<a<1$ and $q = \lceil \frac{r^2}{a} \rceil$ .
There exist $k$ nearly disjoint $r$-uniform hypergraphs $H_1, H_2,\cdots H_k$ on the same vertex
set with $q^{k}$ vertices each with
$\lfloor 4q^{k}(\frac{1}{a})^r\rfloor$ edges such that $\alpha(H_i)< a|V(H_i)|$, for all $1\le i \le k$.
\end{cor}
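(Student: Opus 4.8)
The plan is to instantiate the iterative construction at level $k$ with $q=\lceil r^2/a\rceil$ and set $H_j=G_j^k$ for $1\le j\le k$; the four properties recorded after the construction then yield the corollary at once. Indeed, property~2 gives $|V(H_j)|=q^k$, property~4 gives $\alpha(H_j)<a|V(H_j)|$, property~1 gives pairwise near disjointness, and property~3 gives $|E(H_j)|\le 4q^k(1/a)^r$; since $|E(H_j)|$ is an integer, this bound may be written as $\lfloor 4q^k(1/a)^r\rfloor$. Thus the real content is to establish properties~1--4, which I would do by induction on the level $i$.

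For the size estimates (properties~2, 3, 4) the induction is routine. The base case is $G_1^1$, obtained by applying Lemma~\ref{lem1} with $t=1$ to the $q$-vertex set: here $|V|=q$, $|E|\le 4q(1/a)^r$, and $\alpha<aq$, using $q\ge r^2/a$. In the inductive step there are two kinds of hypergraph. If $j<i$, then $G_j^i$ consists of $q$ vertex-disjoint copies of $G_j^{i-1}$, so its vertex count, edge count, and independence number are each exactly $q$ times those of $G_j^{i-1}$; by induction $|V(G_j^i)|=q\cdot q^{i-1}=q^i$, $|E(G_j^i)|\le q\cdot 4q^{i-1}(1/a)^r=4q^i(1/a)^r$, and $\alpha(G_j^i)=q\,\alpha(G_j^{i-1})<q\cdot aq^{i-1}=aq^i$. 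If $j=i$, then $G_i^i$ is the output of Lemma~\ref{lem1} with $t=|V(G_{i-1}^{i-1})|=q^{i-1}$ and the same $q\ge r^2/a$, so parts (i)--(iv) give $|V|=tq=q^i$, $\alpha<atq=aq^i$, and $|E|\le 4tq(1/a)^r=4q^i(1/a)^r$ directly.

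The main obstacle is property~1, the pairwise near disjointness, where the bookkeeping of how the copy coordinates interact with Lemma~\ref{lem1} is essential. I would index $V(G_j^i)$ by strings $(x_1,\dots,x_i)\in[q]^i$, letting the $i$-th coordinate record the copy introduced at level $i$. Two structural facts drive the argument: (a) for $j<i$ every edge of $G_j^i$ lies inside a single copy, hence has a constant $i$-th coordinate, since $G_j^i$ is a disjoint union of copies of $G_j^{i-1}$; and (b) every edge of $G_i^i$ has pairwise distinct $i$-th coordinates, since the blocks $T_{x_i}$ of Lemma~\ref{lem1} are exactly the $q$ copies of $V(G_{i-1}^{i-1})$, and by part~(ii) an edge meets each block at most once.

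Granting (a) and (b), I would prove near disjointness by induction on $i$, splitting into two cases for a pair $G_\alpha^i,G_\beta^i$ with $\alpha<\beta$. If $\beta<i$, both hypergraphs are disjoint unions of copies indexed by the $i$-th coordinate: edges sitting in different copies are vertex-disjoint, while two edges in a common copy correspond to edges of $G_\alpha^{i-1}$ and $G_\beta^{i-1}$, which meet in at most one vertex by the inductive hypothesis. If $\beta=i$, then by (a) an edge of $G_\alpha^i$ has a single fixed value $c$ of the $i$-th coordinate, whereas by (b) an edge of $G_i^i$ has at most one vertex whose $i$-th coordinate equals $c$; hence their intersection has at most one vertex. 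This completes the induction and, together with the size bounds above, establishes the corollary. The only delicate point is keeping the coordinate conventions consistent across all levels so that (a) and (b) align.
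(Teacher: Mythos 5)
Your verification of properties 1--4 of the iterative construction is correct, and in fact more careful than the paper itself, which merely asserts them; your coordinate bookkeeping for near disjointness (facts (a) and (b)) is exactly the right argument for property~1. However, there is a genuine gap at the point you dismiss in a single clause: the corollary asserts that each $H_i$ has \emph{exactly} $\lfloor 4q^{k}(\frac{1}{a})^r\rfloor$ edges, not at most that many. Rewriting $|E(H_j)|\le 4q^k(\frac{1}{a})^r$ as $|E(H_j)|\le\lfloor 4q^k(\frac{1}{a})^r\rfloor$ is valid but only yields the weaker ``at most'' statement. The exact common size is not a cosmetic detail here: as the remark following the corollary explains, the edges of $H_1,\dots,H_k$ are later used as the lists of the vertices of the \emph{balanced} complete multipartite graph $K(k,m)$ with $m=\lfloor 4q^k(\frac{1}{a})^r\rfloor$, so all $k$ hypergraphs must have the same prescribed number of edges.

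Correspondingly, the paper's proof of this corollary consists almost entirely of the step you omitted: showing that one can \emph{add} edges to the constructed hypergraphs to reach the exact count. The paper argues that at step $i$ of the construction one may insert any transversal edge (one meeting each block $T_c$ in at most one vertex), that there are at least $\binom{q}{r}\left(q^{i-1}\right)^{r}$ such potential edges available --- claimed to exceed $4q^{i}(\frac{1}{a})^r$ --- and that inserting them preserves near disjointness while the independence number can only decrease when edges are added. Note that your own facts (a) and (b) already explain why such padding is safe: an edge of $G_\alpha^i$ with $\alpha<i$ is constant in the $i$-th coordinate, while any transversal edge added to $G_i^i$ has pairwise distinct $i$-th coordinates, so the augmented family remains nearly disjoint. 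Thus the repair fits naturally into your framework, but as written your proof establishes a statement strictly weaker than the one claimed.
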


\begin{proof}
From the construction metioned above we see that we have nearly disjoint hypergraphs $H_j=G_j^k$
such that  $\alpha(H_j)< a|V(H_j)|$ and  $|E(H_j)|\le 4q^{k}(\frac{1}{a})^r$,
for all $1\le j \le k$.
We just need to show we can add edges in $H_j$ such that
   $|E(H_j)|= \lfloor 4q^{k}(\frac{1}{a})^r\rfloor$.
It is in fact true that at every iteration step $i$, one can have $|E(G_j^i)| \leq 4q^{i}(\frac{1}{a})^r$, for all $1\le j \le i$, since at step $i$, we make $q$ copies of the vertex set with $q^{i-1}$ from the previous step and we have at least $\binom{q}{r}(q^{(i-1)})^{r}$ possibilites, which is much greater than $4q^{i}(\frac{1}{a})^r$.
\end{proof}

\noindent \textbf{Remark}: We required the sizes of the hypergraphs
 in the above corollary to be equal since the edges of these hypergraphs
  will form the list assignment for the vertices of a balanced multipartite graph.
But it is not necessary. 
We shall use Corollary~\ref{cor2} to generalize the result to unbalanced multipartite graphs.

\begin{cor}\label{cor2}
Let $k\ge 2, r\ge 2$, $0<a<1$ and $q = \lceil \frac{r^2}{a} \rceil$ .
There exist $k$ nearly disjoint $r$-uniform hypergraphs $H_1, H_2,\cdots H_k$ on the same vertex
set with $q^{k}$ vertices  with $|E(H_i)|$ obtaining any value in $[4q^{k}(\frac{1}{a})^r, 
 \binom{q}{r}q^{(i-1)r+(k-i)}]$ such that $\alpha(H_i)< a|V(H_i)|$, for all $1\le i \le k$.
\end{cor}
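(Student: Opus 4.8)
The plan is to bootstrap from the construction underlying Corollary~\ref{cor1}. That construction already produces nearly disjoint $H_1,\dots,H_k$ with $\alpha(H_i)<a|V(H_i)|$ and $|E(H_i)|=\lfloor 4q^k(1/a)^r\rfloor$, which sits at (or just below) the left endpoint of the desired interval. Since adding edges to a hypergraph can only shrink its independent sets, the inequality $\alpha(H_i)<a|V(H_i)|$ is automatically preserved under adding edges; so the whole task reduces to showing that each $E(H_i)$ can be enlarged one edge at a time, up to $\binom{q}{r}q^{(i-1)r+(k-i)}$, without destroying pairwise near-disjointness. First I would make the product structure of the iterated construction explicit by identifying $V(H_i)=[q]^k$, where the coordinate $c_j\in[q]$ records the copy/part index introduced at iteration $j$.

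With this labeling I would track how an edge of $H_i=G_i^k$ looks. At iteration $i$ the fresh application of Lemma~\ref{lem1} makes the $i$th coordinate the part index, so within any edge of $G_i^i$ the values of coordinate $i$ are pairwise distinct; each later iteration $i+1,\dots,k$ only takes $q$ disjoint copies indexed by the newly introduced coordinate, so coordinates $i+1,\dots,k$ are constant on every edge of $H_i$, while coordinates $1,\dots,i-1$ are unconstrained. Call an $r$-set \emph{legal for $H_i$} if it has pairwise distinct $i$th coordinates and constant coordinates $i+1,\dots,k$. Every original edge of $H_i$ is legal, and a direct count gives exactly $q^{k-i}\binom{q}{r}(q^{i-1})^r=\binom{q}{r}q^{(i-1)r+(k-i)}$ legal $r$-sets, matching the right endpoint of the interval.

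The crux is that legal edges belonging to different hypergraphs are automatically nearly disjoint, so I may freely add any legal edges I like. Indeed, take a legal $e$ for $H_i$ and a legal $f$ for $H_{i'}$ with $i<i'$: the vertices of $f$ have pairwise distinct coordinate $i'$, whereas on $e$ the coordinate $i'$ (which lies among $i+1,\dots,k$) is constant; hence $e$ and $f$ cannot share two vertices. Consequently the family of all legal $r$-sets, collected across $i=1,\dots,k$, is pairwise nearly disjoint, and in particular the original edge sets may be augmented by any further legal edges while keeping $H_1,\dots,H_k$ nearly disjoint. Adding legal edges to $H_i$ one at a time, from the starting count $\lfloor 4q^k(1/a)^r\rfloor$ up to the total $\binom{q}{r}q^{(i-1)r+(k-i)}$ of legal $r$-sets, I realize every integer value of $|E(H_i)|$ in $[4q^k(1/a)^r,\binom{q}{r}q^{(i-1)r+(k-i)}]$, with $\alpha(H_i)<a|V(H_i)|$ surviving throughout.

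The main obstacle I anticipate is not any estimate but the bookkeeping: making the coordinate interpretation of the iterative construction precise enough that ``legal for $H_i$'' genuinely captures both the original edges and the copy structure, and then confirming the one-line coordinate argument for near-disjointness. The remaining points are routine: the interval is nonempty because $\binom{q}{r}(q^{i-1})^r$ already dwarfs $4q^i(1/a)^r$ at each step (as noted in the proof of Corollary~\ref{cor1}), and matching the left endpoint exactly is a harmless floor/ceiling adjustment absorbed by adding at most one edge, so that no edge ever needs to be removed.
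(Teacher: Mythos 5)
Your proposal is correct and takes essentially the same route as the paper: the paper also saturates $G_i^i$ at step $i$ with all $\binom{q}{r}\bigl(q^{i-1}\bigr)^r$ partition-transversal edges, observes that near-disjointness and the independence bound survive, and then takes $q^{k-i}$ disjoint copies to reach $\binom{q}{r}q^{(i-1)r+(k-i)}$. Your coordinate labeling of $V=[q]^k$ and the ``legal edge'' argument simply make explicit what the paper dismisses as ``it is easy to see,'' so this is the same proof with cleaner bookkeeping.
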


\begin{proof}
Consider the hypergraph $G_i^i$ obtained at Step $i$ in the construction. As we saw in Corollary~\ref{cor1}, for every $i$, $G_i^i$ can have at most $4 q^{i}(\frac{1}{a})^r$ edges. In fact, we can add all the possible $\binom{q}{r}(q^{(i-1)})^{r}$ edges. It is easy to see that we still maintain that the hypergraphs obtained so far in the construction are nearly disjoint. Moreover, we also do not increase the independence number of the hypergraphs by adding more edges. In the next $k-i$ steps of the construction we just take $q^{k-i}$ copies of $G_i^i$ to obtain $H_i$ which has at most $\binom{q}{r}q^{(i-1)r+(k-i)}$ edges.
\end{proof}

\section{Coloring complete multipartite graphs\\
 with $s$-separated lists}\label{s2}
Recall that
$K(k,m)=K_{m,m,...,m}$ denotes the complete multipartite graph with $k$ partite sets  of size $m$.
We will use the ideas of~\cite{ERT} and~\cite{K} and the results of the previous section
to prove Theorem~\ref{t3'}. For convenience, we restate it here.

 \begin{theo}\label{t3} For every fixed $k$, \\
 $ \chi_{\ell}(K(k,m),1)=(1+o(1))\chi_{\ell}(K(k,m))=(1+o(1))\log_{k/(k-1)} m$.
 \end{theo}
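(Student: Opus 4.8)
The plan is to prove only the lower bound $\chi_{\ell}(K(k,m),1)\ge(1-o(1))\log_{k/(k-1)}m$, since the matching upper bound follows by combining the already known estimate $\chi_{\ell}(K(k,m))\le(1+o(1))\log_{k/(k-1)}m$ with the trivial inequality $\chi_{\ell}(K(k,m),1)\le\chi_{\ell}(K(k,m))$ from~(\ref{j1}). To obtain the lower bound I will exhibit, for $\ell$ as large as $(1-o(1))\log_{k/(k-1)}m$, a $1$-separated $\ell$-list $L$ from which $K(k,m)$ is not colorable. The engine is Corollary~\ref{cor1}: fixing $a=(k-1)/k$ and setting the uniformity $r=\ell$, it produces $k$ nearly disjoint $r$-uniform hypergraphs $H_1,\dots,H_k$ on a common vertex set $C$ with $|C|=q^{k}$, each satisfying $\alpha(H_i)<a|C|$ and $|E(H_i)|=\lfloor 4q^{k}(1/a)^{r}\rfloor$, where $q=\lceil r^{2}/a\rceil$.

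I will treat $C$ as the palette of colors. Assuming the parameter condition $|E(H_i)|\le m$ verified below, I map the $m$ vertices of the $i$-th part $V_i$ onto $E(H_i)$ so that every edge of $H_i$ is the image of at least one vertex, and I set $L(v)$ to be the $r$-set of colors forming the edge assigned to $v$. Each list then has size exactly $r=\ell$. If $u\in V_i$ and $v\in V_j$ are adjacent in $K(k,m)$ then $i\ne j$, and $L(u),L(v)$ are edges of the nearly disjoint hypergraphs $H_i,H_j$, whence $|L(u)\cap L(v)|\le 1$; vertices inside a common part are non-adjacent, so $L$ is $1$-separated.

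For non-colorability, suppose $f$ were a proper coloring of $K(k,m)$ from $L$. Since every two vertices in distinct parts are adjacent, each color appears in at most one part, so the color sets $f(V_1),\dots,f(V_k)$ are pairwise disjoint subsets of $C$. Fix a part $V_i$. Every edge $e$ of $H_i$ equals $L(v)$ for some $v\in V_i$, and $f(v)\in L(v)=e$, so $f(V_i)$ meets $e$; thus $f(V_i)$ is a transversal of $H_i$ and $C\setminus f(V_i)$ is independent, giving $|C\setminus f(V_i)|\le\alpha(H_i)<a|C|$ and hence $|f(V_i)|>(1-a)|C|=|C|/k$. Summing over the $k$ parts and using disjointness yields $|C|\ge\sum_{i=1}^{k}|f(V_i)|>|C|$, a contradiction. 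Therefore $K(k,m)$ is not $L$-colorable, so $\chi_{\ell}(K(k,m),1)>\ell$.

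It remains to take $\ell$ as large as possible subject to $|E(H_i)|=\lfloor 4q^{k}(1/a)^{r}\rfloor\le m$. With $a=(k-1)/k$ fixed we have $1/a=k/(k-1)$ and $q=\lceil r^{2}k/(k-1)\rceil=\Theta(r^{2})$, so $q^{k}$ is polynomial in $r$ and the edge bound reads $4q^{k}(k/(k-1))^{r}\le m$. Taking $\log_{k/(k-1)}$, the requirement becomes $r\le\log_{k/(k-1)}m-\log_{k/(k-1)}(4q^{k})$; since $r=O(\log m)$ forces $\log_{k/(k-1)}(4q^{k})=O(\log r)=O(\log\log m)$, one may choose $\ell=r=\lfloor\log_{k/(k-1)}m-C_k\log\log m\rfloor=(1-o(1))\log_{k/(k-1)}m$ for a suitable constant $C_k$ depending only on $k$, establishing the lower bound. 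The substantive part of the whole argument lies entirely in Lemma~\ref{lem1} and Corollary~\ref{cor1}, which simultaneously control the independence number and the number of edges; given those, the coloring step is short. The one quantitative point needing care is exactly this last verification, namely that the overhead factor $4q^{k}$ is only polynomial in $r$, so that pushing $\ell$ all the way up to $\log_{k/(k-1)}m$ costs merely an $O(\log\log m)$ additive loss — this is precisely what makes the bound asymptotically tight rather than off by a multiplicative constant.
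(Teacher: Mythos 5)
Your proof is correct and follows essentially the same route as the paper: both invoke Corollary~\ref{cor1} with $a=(k-1)/k$ to turn the edges of $k$ nearly disjoint $r$-uniform hypergraphs into a $1$-separated $r$-list, derive non-colorability from the independence-number bound $\alpha(H_i)<\frac{k-1}{k}|C|$, and finish with the same computation showing the overhead $4q^k$ costs only $O(\log\log m)$; your summation over the disjoint sets $f(V_i)$ is just a rephrasing of the paper's pigeonhole on the smallest color class. The only divergence is that you cite the known upper bound for $\chi_{\ell}(K(k,m))$ instead of reproving it, which is legitimate since the paper itself notes the upper bounds were known (though it also supplies a short first-moment proof).
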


\begin{proof}
Let $G$ be a copy of $K(k,m)$ with partite sets $V_1,\ldots,V_k$.
 Let $L$ be an $r$-list for $G$.  Let $C:=\bigcup_{v\in V(G)}L(v)$. Then, since $G$ is complete $k$-partite,
  $G$ is $L$-colorable if and only if we
 can partition $C$ into sets $C_1,\ldots,C_k$ so that for each $i=1, \dots, k$ and each $v\in V_i$,
 $C_i\cap L(v)\neq \emptyset$. Let $H=H(G,L)$ be the $r$-uniform hypergraph with the vertex set $C$ whose edges
 are the lists of the vertices of $G$. Since lists of some vertices in $G$ may coincide, $H$ may have multiple edges.
For $i=1,\ldots,k$, let $E_i$ be the set of edges of $H$ that correspond to the lists of the vertices in $V_i$. So,
 $G$ is $L$-colorable if and only if

 (*) {\em we
 can color $V(H)$ with $k$ colors $1,\ldots,k$ so that for every $i$ and every edge $A\in E_i$, $A$ contains a vertex of
 color $i$.}

To get the upper bound we show the following statement.
\begin{equation}\label{q1}
\mbox{ Let $km<\left(\frac{k}{k-1}\right)^r$. Then $\chi_{\ell}(K(k,m),1)\leq \chi_{\ell}(K(k,m))\leq r$.}
\end{equation}

By the above, it is enough to prove that for $m<\left(\frac{k}{k-1}\right)^r$, every $r$-uniform hypergraph $H$
with $E(H)=E_1\cup\ldots\cup E_k$ where $|E_i|=m$ for $i=1,\ldots,m$ has a $k$-coloring  satisfying (*).
We color each $v\in V(H)$ randomly: $v$ gets color $i$ with probability $1/k$ independently from all other vertices.
An edge $A\in E_i$ is {\em happy} if some  vertex of $A$ gets color $i$, and {\em unhappy} otherwise.
For  each $A\in E(H)$, the probability that $A$ is unhappy is $(1-1/k)^r$. Thus the expectation
of the number of unhappy edges is at most
$km\left(\frac{k-1}{k}\right)^r<1$. So, there exists a coloring $f$ such that every edge is happy. This proves (\ref{q1}).

To prove the lower bound, observe that  $L$ is $1$-separated if and only if the
corresponding hypergraphs $H_1=(V(H),E_1),\ldots,H_k=(V(H),E_k)$
are nearly disjoint. Let $q:=\lceil \frac{kr^2}{(k-1)}\rceil$.
By Corollary~\ref{cor1} for $a=(k-1)/k$, there exist nearly disjoint $r$-uniform hypergraphs
$H_1,\ldots,H_k$ on the same vertex set, say $V$, such that for every $i=1, \dots, k$,\\
(a) $|E(H_i)|\leq 4(\frac{k}{k-1})^rq^k \leq 4(\frac{k}{k-1})^r 2^kr^{2k}$;\\
(b) $\alpha(H_i)<\frac{k-1}{k}|V|$.

We claim that the hypergraph $H:=\bigcup_{i=1}^kH_i$ does not satisfy (*). Indeed, suppose that
there is a
$k$-coloring $f$ such that for every $i$ and every edge $A\in E_i$,
$f^{-1}(i)\cap A\neq \emptyset$. We may assume that $|f^{-1}(1)|\geq\ldots\geq |f^{-1}(k)|$.
Let $B=V-f^{-1}(k)$. By our ordering, $|B|\geq \frac{k-1}{k}|V|$. So by (b), some edge
of $H_k$ is contained in $B$, a contradiction to the choice of $f$. Thus if
$k$ is fixed and positive integers $r$ and $m$ satisfy
$$m\geq 4(\frac{k}{k-1})^r(2r^2)^k,
$$
then $\chi_{\ell}(K(k,m),1)\geq 1+r$. Since for fixed $k$,
$$\ln \left(4(\frac{k}{k-1})^r(2r^2)^k\right)=
r\ln (\frac{k}{k-1})+ 2k\ln r+ (k+2)\ln 2=r\ln (\frac{k}{k-1})(1+o(1)),$$
the theorem is proved.
\end{proof}

We now state an easy generalization of Theorem~\ref{t3} for unbalanced multipartite graphs. It follows easily from Corollary~\ref{cor2} and the proof of Theorem~\ref{t3}.

\begin{theo}
Given positive integers $k, m_1, m_2, \cdots m_k$ such that $m_1\le m_2\le \cdots \le m_k$, let $r$ be the largest integer such that $m_1 \ge 4q^{k}(\frac{k}{k-1})^r$, where $q=\lceil \frac{k}{k-1}r^2 \rceil$.\smallskip

\noindent If $m_i \in [4q^{k}(\frac{k}{k-1})^r, \binom{q}{r}q^{(i-1)r+(k-i)}]$, for all $1\le i \le k$, then $\chi_{\ell}(K_{m_1, m_2,\cdots m_k},1) > r$.\smallskip

\noindent In other words $\chi_{\ell}(K_{m_1, m_2,\cdots m_k},1) \ge (1-o(1))\log_{k/(k-1)} m_1$
\end{theo}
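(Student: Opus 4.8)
The plan is to mimic the lower-bound argument in the proof of Theorem~\ref{t3}, replacing the single symmetric application of Corollary~\ref{cor1} with the flexible version provided by Corollary~\ref{cor2}, which lets the $k$ hypergraphs carry different numbers of edges. First I would fix $a=(k-1)/k$ and $q=\lceil\frac{k}{k-1}r^2\rceil$, where $r$ is taken to be the largest integer satisfying $m_1\ge 4q^k(\frac{k}{k-1})^r$; this is exactly the threshold that makes the smallest part $V_1$ admit enough edges. The goal is to realize an $r$-list $L$ for $K_{m_1,\ldots,m_k}$ whose associated hypergraphs $H_1,\ldots,H_k$ are nearly disjoint (so $L$ is $1$-separated) with $|E(H_i)|=m_i$ and $\alpha(H_i)<\frac{k-1}{k}|V|$ for each $i$.

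The key step is to invoke Corollary~\ref{cor2}: it guarantees $k$ nearly disjoint $r$-uniform hypergraphs on a common vertex set $V$ of size $q^k$, where each $H_i$ can take any number of edges in the interval $[4q^k(\frac{1}{a})^r,\binom{q}{r}q^{(i-1)r+(k-i)}]$, while still satisfying $\alpha(H_i)<a|V|$. With $a=(k-1)/k$ the lower endpoint is $4q^k(\frac{k}{k-1})^r$, so the hypothesis $m_i\in[4q^k(\frac{k}{k-1})^r,\binom{q}{r}q^{(i-1)r+(k-i)}]$ is precisely what allows me to choose $|E(H_i)|=m_i$. I would then assign the $m_i$ edges of $H_i$ as the lists of the $m_i$ vertices of the part $V_i$, exactly as in the balanced case. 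The near-disjointness of the $H_i$ translates into $1$-separation of $L$, and the independence bound $\alpha(H_i)<\frac{k-1}{k}|V|$ is what will defeat any attempted coloring.

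With the list in hand, the non-colorability argument is a verbatim repeat of the end of the proof of Theorem~\ref{t3}: any proper $L$-coloring corresponds to a $k$-coloring $f$ of $V(H)$ satisfying the covering property (*), and after reordering so that $|f^{-1}(1)|\ge\cdots\ge|f^{-1}(k)|$, the set $B=V\setminus f^{-1}(k)$ has size at least $\frac{k-1}{k}|V|$, so by $\alpha(H_k)<\frac{k-1}{k}|V|$ it contains an edge of $H_k$, contradicting (*). Hence $\chi_{\ell}(K_{m_1,\ldots,m_k},1)>r$. Finally, the asymptotic reformulation follows by taking logarithms as in Theorem~\ref{t3}: since $\ln(4q^k(\frac{k}{k-1})^r)=r\ln(\frac{k}{k-1})(1+o(1))$ and $r$ was chosen maximal with $m_1\ge 4q^k(\frac{k}{k-1})^r$, we obtain $r\ge(1-o(1))\log_{k/(k-1)}m_1$, giving the stated bound.

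The main obstacle I anticipate is not conceptual but arithmetic bookkeeping: one must verify that the constraints $m_i\le\binom{q}{r}q^{(i-1)r+(k-i)}$ for $i\ge2$ are genuinely compatible with the ordering $m_1\le\cdots\le m_k$ and with the single choice of $r$ (and hence $q$) dictated by $m_1$. In particular I would check that the upper endpoints $\binom{q}{r}q^{(i-1)r+(k-i)}$ grow in $i$, so that the larger parts are permitted their larger sizes, and that these endpoints exceed the common lower endpoint $4q^k(\frac{k}{k-1})^r$; both hold comfortably because $\binom{q}{r}(q^{i-1})^r$ already dwarfs $4q^k(\frac{1}{a})^r$ for the relevant $q$, a fact essentially recorded in the proof of Corollary~\ref{cor1}. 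Once these range conditions are seen to be consistent, the rest of the argument transfers without change from the balanced case.
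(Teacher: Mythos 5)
Your proposal is correct and is essentially the paper's own argument: the paper gives no separate proof, stating only that the theorem ``follows easily from Corollary~2 and the proof of Theorem~3,'' which is exactly the substitution of Corollary~\ref{cor2} for Corollary~\ref{cor1} that you carry out. One cosmetic fix: in the unbalanced setting you cannot literally reorder the colors so that $|f^{-1}(1)|\ge\cdots\ge|f^{-1}(k)|$ while keeping colors aligned with the (now non-interchangeable) hypergraphs $H_i$; instead simply pick an index $j$ with $|f^{-1}(j)|\le |V|/k$ and apply $\alpha(H_j)<\frac{k-1}{k}|V|$ to the set $V\setminus f^{-1}(j)$, which works since the independence bound holds for every $i$.
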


\noindent \textbf{Remark}: One might want to show a matching upper bound or improve the lower bound and give a new bound in terms of $m_k$, when $m_k$ is not too large compared to $m_1$. A similar result was shown about the the choice number $\chi_{\ell}$ of unbalanced multipartite graphs in ~\cite{GK}.

\section{Coloring  complete $k$-uniform $k$-partite hypergraphs with $s$-separated lists}\label{s3}
Recall that
 $K^k(k,m)=K^k_{m,m,...,m}$ denotes the complete $k$-uniform $k$-partite
hypergraph with $k$ partite sets  of size $m$. In this section, we
prove Theorem~\ref{t4'}. For convenience, we restate it here.

 \begin{theo}\label{t4} For every fixed $k$,\\
  $ \chi_{\ell}(K^k(k,m),1)=(1+o(1))\chi_{\ell}(K^k(k,m))=(1+o(1))\log_{k} m$ .
 \end{theo}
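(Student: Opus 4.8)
The plan is to mimic the proof of Theorem~\ref{t3} almost verbatim, replacing the multipartite graph structure by the $k$-uniform $k$-partite hypergraph structure and, crucially, changing the base of the logarithm from $k/(k-1)$ to $k$. First I would set up the same translation to an auxiliary hypergraph coloring problem. Let $G$ be a copy of $K^k(k,m)$ with partite sets $V_1,\ldots,V_k$, let $L$ be an $r$-list for $G$, and let $C=\bigcup_v L(v)$ be the color set. Because $G$ is complete $k$-partite $k$-uniform, an edge of $G$ picks exactly one vertex from each $V_i$, and such an edge is monochromatic under $f$ precisely when all $k$ chosen colors coincide. The key observation is that $G$ is $L$-colorable if and only if we can partition $C$ into $C_1,\ldots,C_k$ so that each list $L(v)$ for $v\in V_i$ meets $C_i$; this is exactly the same condition (*) as before. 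So I again form the $r$-uniform hypergraph $H$ on vertex set $C$ whose edges are the lists, split its edge set as $E_1\cup\cdots\cup E_k$ according to the partite sets, and reduce $L$-colorability to the existence of a $k$-coloring of $V(H)$ making every $A\in E_i$ contain a vertex of color $i$.

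For the upper bound I would run the identical first-moment argument, but the arithmetic differs. Here the threshold is $m<k^r$: I color each vertex of $H$ with color $i\in\{1,\ldots,k\}$ independently with probability $1/k$, an edge $A\in E_i$ is unhappy when \emph{no} vertex of $A$ receives color $i$, which happens with probability $(1-1/k)^r$. Summing over all $km$ edges gives expected number of unhappy edges at most $km(1-1/k)^r$, and I need this to be below $1$; this bound is the same as in Theorem~\ref{t3}, and yields a bound of the shape $\log_{k/(k-1)}m$, \emph{not} $\log_k m$. This is the first point where I must be careful: to get base-$k$ I must argue differently. The right statement (*) for the hypergraph case should be reformulated so that a bad configuration requires \emph{all} $k$ parts to fail simultaneously; equivalently, the upper bound should come from observing that $G$ fails to be colorable only when the $k$ color classes cannot be separated, and the relevant event has probability roughly $k^{-r}$ per edge. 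Pinning down the correct colorability criterion so that the first-moment estimate produces $k^r$ rather than $(k/(k-1))^{-r}$ is the main obstacle, and I would resolve it by re-deriving condition (*) for the $k$-uniform case rather than copying it.

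For the lower bound I would again invoke Corollary~\ref{cor1}, but now with $a=1/k$ (rather than $(k-1)/k$), since we want each $H_i$ to have independence number below $\frac{1}{k}|V|$. Setting $q=\lceil k r^2\rceil$, Corollary~\ref{cor1} supplies $k$ nearly disjoint $r$-uniform hypergraphs $H_1,\ldots,H_k$ on a common vertex set $V$ with $|E(H_i)|\le 4k^r q^k$ and $\alpha(H_i)<\frac{1}{k}|V|$. The nearly-disjointness again corresponds exactly to the lists being $1$-separated. I then take $H=\bigcup_i H_i$ and argue it fails condition (*): given any $k$-coloring $f$, some color class $f^{-1}(j)$ has size at least $|V|/k$, and since $\alpha(H_j)<\frac{1}{k}|V|$ that color class must contain an edge of $H_j$. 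That edge $A\in E_j$ then has all its vertices colored $j$, i.e.\ $f^{-1}(i)\cap A=\emptyset$ for every $i\neq j$, which is exactly the obstruction to (*). Hence whenever $m\ge 4k^r(\lceil kr^2\rceil)^k$ we get $\chi_{\ell}(K^k(k,m),1)\ge 1+r$.

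Finally I would take logarithms to extract the asymptotics. For fixed $k$,
\begin{equation}
\ln\!\left(4k^r\,q^k\right)=r\ln k+k\ln q+\ln 4 = r\ln k\,(1+o(1)),
\end{equation}
since $q=\lceil kr^2\rceil$ makes $k\ln q=O(\ln r)$ negligible against $r\ln k$. Inverting gives $r=(1+o(1))\log_k m$, so the lower bound matches the base-$k$ logarithm and, together with the upper bound and inequality~(\ref{j1}), establishes the claimed asymptotics. The only genuinely new work beyond transcribing the proof of Theorem~\ref{t3} is choosing $a=1/k$ in Corollary~\ref{cor1} and verifying that the size-$|V|/k$ majority color class argument (a pigeonhole over $k$ classes rather than the ``drop the smallest class'' argument used before) correctly forces a monochromatic edge; I expect that pigeonhole step, and the matching reformulation of (*), to be the crux.
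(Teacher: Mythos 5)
Your skeleton matches the paper's proof exactly -- Corollary~\ref{cor1} with $a=1/k$ and $q=kr^2$, nearly disjoint $\Leftrightarrow$ $1$-separated, the pigeonhole on the largest color class, and the final logarithmic estimate are all precisely what the paper does -- but the step you flag as ``the crux'' and leave unresolved is the one piece of genuinely new content in this theorem, and without it your write-up is internally inconsistent. Your opening claim, that $G=K^k(k,m)$ is $L$-colorable iff $C$ partitions into $C_1,\ldots,C_k$ with $L(v)\cap C_i\neq\emptyset$ for $v\in V_i$ (``exactly the same condition (*) as before''), is false for $k\ge 3$. Since an edge of $K^k(k,m)$ takes one vertex from each part, a coloring is proper iff no color appears in \emph{all} $k$ parts; assigning each color to a part it misses, one gets the paper's condition (**): partition $C$ into $C_1,\ldots,C_k$ so that $L(v)\not\subseteq C_i$ for every $v\in V_i$, i.e.\ in the auxiliary hypergraph $H$ no edge $A\in E_i$ is monochromatic of color $i$. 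Condition (*) is sufficient for colorability but not necessary, so exhibiting lists that violate (*) proves no lower bound at all.

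This matters at both ends of your argument. For the upper bound, once (**) is stated the first-moment computation is immediate: the bad event for $A\in E_i$ is that all $r$ vertices of $A$ receive color $i$, probability $k^{-r}$, so the expected number of bad edges is at most $km\,k^{-r}$, which is less than $1$ when $m<k^{r-1}$; this is where base $k$ comes from, and it is a two-line derivation rather than the open obstacle you describe. For the lower bound, your own argument produces an edge $A\in E_j$ with all vertices colored $j$ and calls this ``exactly the obstruction to (*)''; it is not -- such an edge \emph{satisfies} (*), since it certainly contains a vertex of color $j$. It is the obstruction to (**). So your lower bound becomes correct, and identical to the paper's, only after (*) is replaced by (**) throughout; with that replacement your constant $4k^r(\lceil kr^2\rceil)^k$ and your asymptotic computation $\ln\bigl(4k^rq^k\bigr)=r\ln k\,(1+o(1))$ agree with the paper's, which proves $\chi_{\ell}(K^k(k,m),1)\geq 1+r$ whenever $m\geq 4k^r(kr^2)^k$ and hence the claimed asymptotics via (\ref{j1}).
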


\begin{proof}

Let $G$ be a copy of $K^k(k,m)$ with partite sets $V_1,\ldots,V_k$.
 Let $L$ be an $r$-list for $G$.   Let $C:=\bigcup_{v\in V(G)}L(v)$. Since $G$ is complete
 $k$-uniform $k$-partite, a coloring of $V(G)$ is proper if and only if no color is present on each $V_i$. Thus,
  $G$
   is $L$-colorable if and only    if we
 can partition $C$ into sets $C_1,\ldots,C_k$ so that for each $i=1, \dots, k$ and each $v\in V_i$,
 $C_i$ does not contain $L(v)$. As in the proof of Theorem~\ref{t3'}, let $H=H(G,L)$
 be the $r$-uniform hypergraph with the vertex set $C$ whose edges
 are the lists of the vertices of $G$.
For $i=1,\ldots,k$, let $E_i$ be the set of edges of $H$ that correspond to the lists of the vertices in $V_i$. So,
 $G$ is $L$-colorable if and only if

 (**) {\em we
 can color $V(H)$ with $k$ colors $1,\ldots,k$ so that for every $i$ and every edge $A\in E_i$, $A$ is
 not monochromatic of
 color $i$.}

First we prove:
\begin{equation}\label{q1'}
\mbox{ Let $m<k^{r-1}$. Then $\chi_{\ell}(K^k(k,m),1)\leq \chi_{\ell}(K^k(k,m))\leq r$.}
\end{equation}

It is enough to prove that for $m<k^{r-1}$, every $r$-uniform hypergraph $H$
with $E(H)=E_1\cup\ldots\cup E_k$ where $|E_i|=m$ for $i=1,\ldots,m$ has a $k$-coloring  satisfying (**).
We color each $v\in V(H)$ randomly: $v$ gets color $i$ with probability $1/k$ independently from all other vertices.
An edge $A\in E_i$ is {\em happy} if some  vertex of $A$ gets color distinct from $i$, and {\em unhappy} otherwise.
For  each $A\in E(H)$, the probability that $A$ is unhappy is $k^{-r}$. Thus the expectation
of the number of unhappy edges is at most
$kmk^{-r}<1$. So, there exists a coloring $c$ such that every edge is happy. This proves (\ref{q1'}).

Now we prove the lower bound. Recall that
  $L$ is $1$-separated if and only if the
corresponding hypergraphs $H_1=(V(H),E_1),\ldots,H_k=(V(H),E_k)$
are nearly disjoint. Let $q:=kr^2$.
By Corollary~\ref{cor1} for $a=1/k$, there exist nearly disjoint $r$-uniform hypergraphs
$H_1,\ldots,H_k$ on the same vertex set, say $V$ such that for every $i=1,\dots, k$,\\
(a) $|E(H_i)|\leq 4k^rq^k$;\\
(b) $\alpha(H_i)<\frac{1}{k}|V|$.

We claim that the hypergraph $H:=\bigcup_{i=1}^kH_i$ does not satisfy (**). Indeed, suppose that
there is a
$k$-coloring $f$ such that for every $i$ and every edge $A\in E_i$,
$A\not\subseteq f^{-1}(i)$. We may assume that $|f^{-1}(1)|\geq |V|/k$.
 Then  by (b), some edge
of $H_1$ is contained in $f^{-1}(1)$, a contradiction to the choice of $f$. Thus if
$k$ is fixed and positive integers $r$ and $m$ satisfy
$$m\geq 4k^r(kr^2)^k,
$$
then $\chi_{\ell}(K^k(k,m),1)\geq 1+r$. Since for fixed $k$,
$$\ln \left(4k^r(kr^2)^k\right)=
(r+k)\ln k+2k\ln r+2\ln 2=r\ln k(1+o(1)),$$
the theorem is proved.
\end{proof}

\end{document}